\def\cal#1{\mathcal{#1}}
\def\lr#1{\left\langle #1 \right\rangle}
\def\co{\colon}
\newtheorem{theorem}{Theorem}[section]
\newtheorem{conj}{Conjecture}
 \newtheorem{lem}[theorem]{Lemma}
\theoremstyle{definition}
\newtheorem{definition}[theorem]{Definition}
\theoremstyle{remark}
\numberwithin{equation}{section}
 \numberwithin{equation}{section}
\begin{document}

\title[An intrinsic curvature condition for submersions]{An intrinsic curvature condition for submersions over Riemannian manifolds}


\author{Llohann D. Speran\c ca}
\address{Instituto de Ci\^encia e Tecnologia, Unifesp\\
	Av. Cesare Monsueto Giulio Lattes, 1211 - Jardim Santa Ines I\\
	CEP 12231-280 \\
	S\~ao Jos\'e dos Campos, SP, Brazil}
\email{lsperanca@gmail.br}

\subjclass[2010]{  MSC 53C20, MSC 53C12 \and MSC 57R30  }

\keywords{Positive sectional curvature; Riemannian foliations; Wilhelm Conjecture}


\begin{abstract}Let $\pi{:}\,(M,\cal {H})\to (B,b)$ be a submersion equipped with a horizontal connection $\cal H$ over a Riemannian manifold $(B,b)$. We present an intrinsic curvature condition that only depends on the pair $(\cal H,b)$. By studying a set of relative flat planes, we prove that a certain class of pairs $(\cal H,b)$ admits a compatible metric with positive sectional curvature only if they are  \textit{fat}, verifying Wilhelm's Conjecture in this class. \end{abstract}

\maketitle

\section{Introduction}

Submersions from Lie groups constitute a main source of examples for manifolds with positive sectional curvature (see \cite{ziller2007examples} for a survey). Given a Riemannian submersion $\pi\co (M,g)\to (B,b)$ (i.e., a submersion $\pi\co M\to B$ such that the restriction $d\pi_{(\ker d\pi)^\bot}$ is an isometry), an usual practice is to deform the  starting metric $g$ along directions tangent to $\ker d\pi$. Here we study a curvature condition  independent of such deformations (equation \eqref{eq:WNN}, Proposition \ref{thm:var}).

Let $\pi\co (M,\cal H)\to (B,b)$ be a submersion equipped with a horizontal connection $\cal H$, i.e., a distribution $\cal H$ complementary to $\cal V=\ker d\pi$. We call a Riemannian metric $g$ on $M$ \textit{adapted to $(\cal H,b)$} if $g(\cal V,\cal H)=0$ and $d\pi|_\cal H\co\cal H\to TB$ is an isometry (given $(\cal H,b)$, the set of adapted metrics is in bijection with the set of metrics in $\cal V$, which is usually infinite dimensional.) One verifies that $\pi\co (M,g)\to (B,b)$ is Riemannian for any adapted metric $g$. Using the theory developed in this note  we prove, for instance, the following result about homogeneous submersions (a broader class of spaces and metrics is considered in Theorem \ref{thm:var}):
\begin{theorem}\label{thm:homo}
Let $K{<\,}H{<\,}G$ be compact Lie groups and consider the submersion $\pi\co (G/K,\cal H)\to (G/H,b)$ defined by $\pi(lK)=lH$. Suppose that $(G/H,b)$ is  normal homogeneous and $l\cdot \cal H_p=\cal H_{lp}$ for all $l\in G$. Then $G/K$ has an adapted metric with positive sectional curvature only if $\cal H$ is fat.
\end{theorem}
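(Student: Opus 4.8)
The plan is to reduce the statement to an assertion of linear algebra at a single point and then to feed a degenerate configuration produced by non-fatness into the intrinsic formula of Proposition~\ref{thm:var}. By the hypothesis $l\cdot\cal H_p=\cal H_{lp}$, the pair $(\cal H,b)$ is $G$-invariant, so it is determined by its value at $o=eK$. I would write $\lie g=\lie k\oplus\lie q\oplus\cal H_o$, where $\lie q=\cal V_o$ is tangent to the fibre $H/K$ and $\cal H_o$ is an $\mathrm{Ad}(K)$-invariant complement; normal homogeneity means $b$ is induced by a bi-invariant form $Q$ on $\lie g$. Every adapted metric $g$ is $G$-invariant and determined by an $\mathrm{Ad}(K)$-invariant inner product on $\lie q$, the horizontal part being fixed by the requirement that $d\pi|_{\cal H}$ be an isometry. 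Hence the whole question lives at $o$, and the sectional curvatures of all adapted metrics may be compared there.

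Next I would record the intrinsic meaning of fatness. The connection $\cal H$ is fat exactly when its integrability (O'Neill) tensor $A$ is nondegenerate: for every nonzero horizontal $X$ the skew map $V\mapsto A_X V$ has trivial kernel. This is a condition on the bracket together with the splitting $TM=\cal H\oplus\cal V$ and the pullback of $b$, hence on $(\cal H,b)$ alone and independent of the chosen vertical metric. Arguing by contrapositive, I assume $\cal H$ is \emph{not} fat and fix nonzero $X\in\cal H_o$ and $V\in\cal V_o$ with $A_X V=0$.

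The heart of the argument is to turn $A_X V=0$ into a \emph{relative flat plane}, i.e.\ a plane on which the intrinsic quantity of \eqref{eq:WNN} vanishes. By Proposition~\ref{thm:var} this intrinsic quantity dominates the sectional curvature of every adapted metric, so a relative flat plane has $\sec_g\le 0$ for all adapted $g$, independently of the vertical deformation. I would then show that the kernel pair $(X,V)$ of $A$ determines such a plane: feeding it into \eqref{eq:WNN} and using that $b$ is normal homogeneous, so that the surviving terms are governed by the bi-invariant form $Q$, makes the intrinsic value collapse to zero precisely when $A_X V=0$; bi-invariance is what cancels the cross terms that a non-normal base would otherwise contribute.

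It then follows that no adapted metric can be positively curved, since every adapted metric is nonpositively curved on this relative flat plane; hence positive sectional curvature forces $A$ to be everywhere nondegenerate, which is exactly fatness of $\cal H$. The step I expect to be the main obstacle is the third one: proving that the curvature of the plane determined by a kernel vector of $A$ genuinely decouples from the vertical metric and is pinned to zero. This is where normal homogeneity is indispensable — bi-invariance of $Q$ is what annihilates the base and mixed cross terms so that the vertical freedom cannot restore positivity — and it is precisely the content that Proposition~\ref{thm:var} and the notion of relative flat planes are designed to supply.
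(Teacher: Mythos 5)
Your proposal contains two genuine gaps, and each one by itself breaks the argument. The first is the reduction to a single point: you assert that every adapted metric is $G$-invariant and determined by an $\mathrm{Ad}(K)$-invariant inner product on $\lie q$. That is false. An adapted metric is constrained only by $g(\cal V,\cal H)=0$ and by $d\pi|_{\cal H}$ being an isometry onto $(G/H,b)$; its restriction to $\cal V$ is an \emph{arbitrary} fiber metric, and the paper stresses that the set of adapted metrics is in bijection with the (usually infinite-dimensional) set of metrics on $\cal V$. Only the data $(\cal H,b)$ is homogeneous; the metrics the theorem must exclude need not be, so no pointwise linear-algebra reduction at $o=eK$ is available. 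This infinite-dimensional freedom in the vertical directions is exactly what the WNN machinery is built to handle.

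The second gap is the step you yourself flag as the heart of the argument: that a kernel pair $A^*_XV=0$ spans a ``relative flat plane'' on which every adapted metric has $\sec_g\le 0$. Theorem \ref{thm:var} asserts nothing of the sort: the right-hand side of \eqref{eq:WNN} is an inner product against $A^*_X\xi$, so at a kernel pair the inequality reads $0\ge 0$ and carries no curvature information whatsoever. Moreover the pointwise statement is simply false, even under the hypotheses of Theorem \ref{thm:homo}: take $G=T^2$, $H=S^1$, $K=\{e\}$, so that $A\equiv 0$ and \emph{every} vertizontal pair is a kernel pair; the adapted metric $dx^2+f(x)^2\,dy^2$ has vertizontal curvature $-f''/f$, which is positive wherever $f''<0$. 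The obstruction coming from non-fatness is global, not pointwise, and that is precisely what the paper's proof of Theorem \ref{thm:var}(3) supplies: Lemma \ref{lem:flatgeo} (Gronwall, using WNN) propagates $A^*_{\dot c}\nu=0$ along the \emph{entire} horizontal geodesic, compactness of the structure group bounds $\|\nu(t)\|$, and equation \eqref{eq:K} together with positive curvature forces $\|\nu(t)\|^2$ to grow without bound --- a contradiction that uses compactness of $G/K$ and of the structure group $H$, hypotheses your proposal never invokes. Finally, normal homogeneity enters the actual proof quite differently from your ``cancellation of cross terms'': it provides one distinguished adapted metric, the one induced on $G/K$ by the bi-invariant metric of $G$, which is non-negatively curved with totally geodesic fibers. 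Theorem \ref{thm:var}(2) makes this metric WNN, item (1) transfers WNN to every adapted metric, and item (3) then converts positive sectional curvature of any adapted metric into fatness of $\cal H$.
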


We call a connection $\cal H$ as \textit{fat} if every non-zero vector $X\in\cal H$ can be locally extended to a field $\bar X\in\cal H$ such that $\cal V=[\bar X,\cal H]^v$ (see e.g. \cite{weinstein1980fat}). 

Gonz\'alez and Radeschi \cite{gonzalez2017note}  considered submersions from spaces homotopically equivalent to the known examples with positive sectional curvature. They prove that such submersions satisfy Wilhelm's Conjecture conclusion, without curvature assumptions:

\begin{conj}
	Let $\pi\co (M^{n+k},g)\to (B^n,b)$ be a Riemannian submersion. If $(M,g)$ is compact and has positive sectional curvature, then $k<n$.
\end{conj}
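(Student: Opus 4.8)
The plan is to attack this (the general form of Wilhelm's Conjecture) by reducing the curvature hypothesis to a purely infinitesimal, fatness-type condition on O'Neill's integrability tensor, and then bounding $k$ by the topology of the resulting nonsingular bilinear map. Fix $p\in M$ and split $T_pM=\cal H_p\oplus\cal V_p$ with $\dim\cal V_p=k$, $\dim\cal H_p=n$, where $\cal V=\ker d\pi$. Recall O'Neill's formula: for a unit horizontal $X$ and a unit vertical $V$, the mixed sectional curvature $\sec_g(X,V)$ equals $|A_XV|^2$ plus correction terms built from the second fundamental form $T$ of the fibers and its covariant derivatives. The guiding principle is that the sign of $\sec_g(X,V)$ should, after removing the $T$-contributions, be forced to agree with the non-vanishing of $A_XV$.

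First I would exploit the freedom in the vertical part of the metric. Since $g$ may be replaced by any adapted metric without changing $(\cal H,b)$, I would run a canonical variation collapsing the fibers; under this deformation the $A$-term and the $T$-terms scale at different orders, so that in the collapsing limit the set of \emph{relative flat planes}, those $(X,V)$ with $A_XV=0$, is exactly the locus where positivity of $\sec_g(X,V)$ can fail. The aim of this step is to conclude that if $(M,g)$ is positively curved, then for the associated pair $(\cal H,b)$ one has $A_XV\neq0$ for all nonzero horizontal $X$ and vertical $V$; by the characterization recalled after Theorem \ref{thm:homo}, this is precisely the fatness of $\cal H$.

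With fatness established, the map $(X,V)\mapsto A_XV$ is a nonsingular bilinear map $\cal H_p\times\cal V_p\to\cal H_p$, that is, a nonsingular bilinear map $\mathbb R^n\times\mathbb R^k\to\mathbb R^n$. The Hurwitz--Radon theory of such maps then forces $k\le\rho(n)$, where $\rho$ is the Radon--Hurwitz number, and $\rho(n)<n$ except when $n\in\{1,2,4,8\}$. In those borderline dimensions I would invoke the extra rigidity coming from positive curvature: the positively curved fat models there are Hopf-type, with $k=n-1$, so the equality $k=n$ is excluded and one obtains $k<n$ in all cases.

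The hard part, and the reason the present paper proves the conclusion only for the restricted class of Theorem \ref{thm:var}, is the second step: positive curvature of a general Riemannian submersion does \emph{not} obviously imply fatness. The $T$-tensor and the infinitely many choices of vertical metric can conspire to keep every $\sec_g(X,V)$ positive while some $A_XV$ degenerates, and the collapsing deformation may destroy positivity of the purely horizontal or purely vertical planes before it isolates the mixed ones. A complete argument would therefore need a global mechanism forcing the set of relative flat planes to be empty, for instance a minimax over this set combined with Wilking's dual-foliation rigidity, rather than the pointwise estimate sketched above.
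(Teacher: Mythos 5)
This statement is Wilhelm's Conjecture itself, which the paper states as an open \emph{conjecture} and does not prove; it only establishes the conclusion under strong extra hypotheses (adapted metrics that are weakly non-negatively curved, compactness, compact structure group) in Theorems \ref{thm:homo} and \ref{thm:var}. Your proposal is therefore not comparable to any proof in the paper, and, as you yourself concede in your final paragraph, it is not a proof: the central step --- that positive sectional curvature of $(M,g)$ forces $A^*_X\xi\neq 0$ for all nonzero $X\in\cal H$, $\xi\in\cal V$, i.e.\ fatness --- is exactly the open part of the problem. Your proposed mechanism for it, a canonical variation collapsing the fibers, does not work as described: shrinking the fibers does not preserve positive (or even non-negative) curvature in general, and the scaling asymptotics only show that fatness is \emph{necessary for positivity of the collapsed limit}, which says nothing about the fixed metric $g$ you started with. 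The paper's substitute for this step is entirely different and much more delicate: the deformation-invariant inequality \eqref{eq:WNN}, the flat-strip Lemma \ref{lem:flatgeo} (a Gronwall argument showing $A^*_{\dot c}\nu\equiv 0$ propagates along dual holonomy fields), and the boundedness of holonomy fields under a compact structure group, which together with \eqref{eq:K} produce the contradiction $\frac{d^2}{dt^2}\|\nu\|^2\geq\kappa\|\nu\|^2$. None of these ingredients is available for an arbitrary Riemannian submersion, which is precisely why the paper's conclusion is restricted to WNN pairs $(\cal H,b)$.

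A secondary point: even granting fatness, your topological step is both overpowered and imprecise. Since $\lr{A^*_X\xi,X}=\lr{A_XX,\xi}=0$, fatness gives an injective map $A^*_X\co\cal V_p\to X^\perp\cap\cal H_p$ for each unit $X$, whence $k\leq n-1<n$ immediately --- this is the elementary fact behind the paper's remark that ``the existence of a fat connection implies Wilhelm's conjecture.'' No Hurwitz--Radon bound $k\leq\rho(n)$ is needed, and your handling of the borderline dimensions $n\in\{1,2,4,8\}$ rests on an unproved classification claim (``the positively curved fat models there are Hopf-type''). Note also that Hurwitz--Radon nonsingularity requires $A^*_X\xi\neq 0$ for \emph{all} nonzero pairs, which is what fatness gives, but the stronger conclusion $k\leq\rho(n)$ is simply unnecessary once the orthogonality $A^*_X\xi\perp X$ is observed. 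So the correct assessment is: your reduction-to-fatness step has a genuine, acknowledged gap coinciding with the open content of the conjecture, while your endgame, though salvageable, should be replaced by the one-line dimension count above.
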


(See also  \cite{amann2015positive,chen2016riemannian,gonzalez2016soft} for interesting developments.)
Theorem \ref{thm:homo} assumes hypothesis on curvature but might cover different spaces. For instance, let $G=Spin(5)$, $H=Spin(3)$ and $G/H$ be the Berger space $B^7$ (\cite{berger1961varietes}). Then Theorem \ref{thm:homo} guarantees that $Spin(5)$ do not have a positively curved metric adapted to $(\cal H,b)$,  the usual horizontal distribution and metric induced by the bi-invariant metric. Theorem \ref{thm:var} below covers a wider class of submersions, including the non-standard submersions constructed in \cite{kerin-shankar}.

Generally, the existence of a fat connection implies Wilhelm's conjecture. In particular, Theorems \ref{thm:homo} and Theorem \ref{thm:var} verifies Wilhelm's conjecture for  particular classes of metrics.

Our strategy is to explore the behavior of the set of vertizontal planes that vanishes under Grey--O'Neill's $A$-tensor, assuming  a certain curvature condition: given $X,Y$, horizontal vectors at $p\in M$, one computes $A\co\cal H\times\cal H\to \cal V$ as
\begin{equation}\label{eq:A}
A_XY=\frac{1}{2}[\bar X,\bar Y]^v,
\end{equation}
where $\bar X,\bar Y\in\cal H$ are horizontal extensions of $\bar X,\bar Y$ and $^v$  stands for the projection onto the $\cal V$-component. We also consider $A^*\co\cal H\times\cal V\to \cal H$,  the dual of $A$, defined as $\lr{A^*_X\xi,Y}=\lr{A_XY,\xi}$ for all $X,Y\in\cal H$ and $\xi\in\cal V$.

\begin{definition}
Let $\pi\co(M,\cal H)\to (B,b)$ be a submersion. An adapted metric $g$ is called weakly non-negatively curved (WNN) if every $p\in M$ has a neighborhood $U$ where 
\begin{equation}\label{eq:WNN}
\tau ||X||||A^*_X\xi||^2\geq\lr{(\nabla_XA^*)_X\xi+A_X^*S_X\xi ,A^*_X\xi},
\end{equation}
for some $\tau>0$ and all $X\in\cal H|_U$, $\xi\in\cal V|_U$.
\end{definition}

Inequality \eqref{eq:WNN} holds on submersion with totally geodesic fibers from a non-negatively curved manifold, being a condition strictly weaker then non-negative sectional curvature -- observe that \eqref{eq:WNN} does not give any estimate on the curvature of the base, in contrast to non-negative curvature (see \cite{oneill}). Furthermore, the WNN property is intrinsic to $(\cal H,b)$ (item (1) below).

\begin{theorem}\label{thm:var}
	Let $\pi\co(M,\cal H)\to (B,b)$ be a submersion and $g$ be adapted to $(\cal H,b)$. Then,
\begin{enumerate}
	\item 	$g$ is weakly non-negatively curved if and only if every metric adapted to $(\cal H,b)$ is weakly non-negatively curved;
	\item  if $g$ has  non-negative sectional curvature and the fibers of $\pi$ are totally geodesic, then $g$ is weakly non-negatively curved;
	\item suppose $(M,g)$ is compact, weakly non-negatively curved and $\pi$ has a compact structure group. Then, if $g$ has positive sectional curvature, $\cal H$ is fat.
\end{enumerate}
\end{theorem}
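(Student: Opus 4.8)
The plan is to prove the three items of Theorem~\ref{thm:var} in the order (1), (2), (3), since each builds conceptually on the previous one, and the real content lies in item (3).

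\medskip

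\textbf{Item (1): intrinsic character of the WNN condition.} Any two metrics $g, g'$ adapted to $(\cal H,b)$ agree on $\cal H$ (both make $d\pi|_\cal H$ an isometry) and differ only by the choice of inner product on $\cal V$. The plan is to show that every quantity appearing in \eqref{eq:WNN} is determined by the pair $(\cal H,b)$ together with the chosen vertical metric, but in a way that the \emph{inequality} transforms consistently. The key observation is that $A$ is computed purely from Lie brackets of horizontal extensions projected to $\cal V$, hence depends only on $\cal H$ (not on $g$); the vertical metric only enters through $A^*$, $S$ (the shape operator / second fundamental form of the fibers), and the covariant derivative $\nabla$. I would compute how $A^*_X\xi$, $S_X\xi$, and $(\nabla_X A^*)_X\xi$ change when the vertical metric is rescaled or deformed, and verify that both sides of \eqref{eq:WNN} scale or transform so that validity for one adapted metric forces validity for all (possibly after adjusting the constant $\tau$ and shrinking $U$). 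Since the statement is an ``if and only if'' over \emph{all} adapted metrics, it suffices to show the WNN property is invariant under the transformation relating any two adapted metrics.

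\medskip

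\textbf{Item (2): non-negative curvature plus totally geodesic fibers implies WNN.} Here I would invoke O'Neill's formulas. When the fibers are totally geodesic, the tensor $S$ (mixed second fundamental form term) vanishes or simplifies, so the right-hand side of \eqref{eq:WNN} reduces to $\lr{(\nabla_X A^*)_X\xi, A^*_X\xi}$. The plan is to relate $(\nabla_X A^*)_X\xi$ to a sectional curvature of $(M,g)$ via the appropriate O'Neill identity for the derivative of the $A$-tensor (specifically the formula expressing $\lr{(\nabla_X A)_X Y, \xi}$ or its dual in terms of $K(X,\xi)$ and $\|A_X Y\|^2$-type terms). Non-negativity of sectional curvature then yields the inequality with any $\tau>0$ once $\|X\|$ is bounded on the compact neighborhood $U$; the factor $\|X\|\,\|A^*_X\xi\|^2$ on the left absorbs the remaining terms. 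The totally geodesic hypothesis is what makes the $A^*_X S_X\xi$ term tractable.

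\medskip

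\textbf{Item (3): the main result --- positive curvature forces fatness.} This is where the hard part lies. The strategy, as flagged in the introduction, is to study the set of \emph{relative flat planes}, i.e. vertizontal planes $\mathrm{span}\{X,\xi\}$ with $A^*_X\xi = 0$ (equivalently $A_X\xi$-type vanishing), since fatness of $\cal H$ is exactly the statement that no such nontrivial planes exist (for every nonzero $X\in\cal H$, the map $\xi\mapsto A^*_X\xi$ is injective). I would argue by contradiction: suppose $\cal H$ is not fat, so at some $p$ there exist $X\neq 0$ and $\xi\neq 0$ with $A^*_X\xi=0$. Consider $A^*_X\xi$ as a function along a suitable curve (e.g. the horizontal lift of a geodesic in the direction $X$, or a flow generated by $X$) and examine its behavior where it vanishes. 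The WNN inequality \eqref{eq:WNN} should be read as a differential inequality controlling $\|A^*_X\xi\|^2$ along this curve: if $f(t) = \|A^*_{X(t)}\xi(t)\|^2$ vanishes at a point, then \eqref{eq:WNN} bounds $f'$ (or the relevant derivative) in terms of $\tau\|X\|f$, giving a Gronwall-type estimate forcing $f\equiv 0$ in a neighborhood, hence producing a zero-curvature vertizontal plane that contradicts positivity of sectional curvature. The compact structure group hypothesis is used to globalize or to ensure the relevant flows and extensions exist and behave well (and to guarantee $\|X\|$ and related quantities are uniformly controlled). The main obstacle I anticipate is organizing the differential inequality correctly: identifying which derivative of $A^*_X\xi$ is governed by \eqref{eq:WNN}, choosing the right curve/flow so that $S_X\xi$ and $(\nabla_X A^*)_X\xi$ assemble into the total derivative of $f$, and ensuring the Gronwall argument propagates the vanishing far enough to contradict positive curvature rather than merely at an isolated point. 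Handling the case where $A^*_X\xi$ vanishes only to high order, rather than on an open set, will require care --- this is the crux of turning the infinitesimal WNN bound into the global fatness conclusion.
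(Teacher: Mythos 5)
Your skeleton for item (3) --- propagate the vanishing of $A^*$ along a horizontal geodesic by a Gronwall argument, then contradict positive curvature --- matches the paper's (Lemma \ref{lem:flatgeo}), but your closing step is wrong, and it is precisely where the content lies. Vanishing of $A^*_{\dot c}\nu$ does \emph{not} produce a zero-curvature vertizontal plane: O'Neill's formula for a vertizontal plane contains second-fundamental-form terms, so $K_g(\dot c,\nu)$ can be strictly positive even when $A^*_{\dot c}\nu=0$ (item (3) assumes nothing about the fibers being totally geodesic). Indeed, if your claim were correct, the compact-structure-group hypothesis would be superfluous, since a single flat plane already contradicts positive curvature. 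The paper's contradiction is instead of flat-strip type along the \emph{whole} geodesic: the vertical field $\nu$ must be taken to be a dual holonomy field, i.e.\ $\nabla_{\dot c}\nu=-A^*_{\dot c}\nu+S_{\dot c}\nu$ (this is exactly the ``right choice of transport'' you flag as an obstacle but leave open; it makes $(\nabla_{\dot c}A^*)_{\dot c}\nu+A^*_{\dot c}S_{\dot c}\nu=\nabla_{\dot c}(A^*_{\dot c}\nu)$, so that \eqref{eq:WNN} becomes $u'\le 2\tau u$ for $u=\|A^*_{\dot c}\nu\|^2$ and Gronwall applies; your worry about high-order vanishing is then moot). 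Once $A^*_{\dot c}\nu\equiv 0$, the curvature identity \eqref{eq:K} for dual holonomy fields gives $\tfrac{d^2}{dt^2}\|\nu\|^2\ge 2K_g(\dot c,\nu)\ge\kappa\|\nu\|^2>0$, so $\|\nu(t)\|^2$ is strictly convex on $\bb R$, hence unbounded; this contradicts the bound $\|\nu(t)\|^2\le L\|\nu(0)\|^2$ furnished by the compact structure group (that is its actual role, not the vague ``globalization'' in your proposal). Without \eqref{eq:K} and the boundedness of holonomy fields, your argument cannot close.

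Items (1) and (2) have gaps of the same nature: right ingredients, missing mechanisms. In (2), the term to bound, $\lr{(\nabla_XA^*)_X\xi,A^*_X\xi}=R_g(X,A^*_X\xi,\xi,X)$, is a \emph{mixed} curvature term; non-negative sectional curvature does not bound it directly (and the constant is not ``any $\tau>0$''). The paper obtains the bound from the discriminant of the non-negative quadratic $\lambda\mapsto K_g(X,\xi+\lambda A^*_X\xi)$, namely $R_g(X,A^*_X\xi,\xi,X)^2\le K_g(X,\xi)\,K_g(X,A^*_X\xi)$, and then uses O'Neill's identities $K_g(X,\xi)=\|A^*_X\xi\|^2$, $R_g(X,A^*_X\xi,\xi,X)=\lr{(\nabla_XA^*)_X\xi,A^*_X\xi}$ (valid for totally geodesic fibers) together with the compactness bound $K_g(X,A^*_X\xi)\le\tau\|X\|^2\|A^*_X\xi\|^2$; this Cauchy--Schwarz step is absent from your plan. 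In (1), the direct computation you propose is not carried out and would be delicate: $S'$ and $\nabla'$ for the second metric involve derivatives of the tensor $P$ relating the two vertical metrics, and you give no reason why those derivative terms cancel in the combination appearing in \eqref{eq:WNN}. The paper bypasses the computation entirely: WNN is equivalent to the differential inequality $u'\le 2\tau u$ along dual holonomy fields, and dual holonomy fields of $g$ and $g'$ correspond via $\nu'=P^{-1}\nu$ with $A^\dagger_{\dot c}\nu'=A^*_{\dot c}\nu$ (Lemma \ref{lem:dualinv}), so the function $u$ is literally the same for both metrics. The single missing idea across all three items is the dual holonomy field, which your proposal never isolates.
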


Based on item (1), we call a pair $(\cal H,b)$ \textit{weakly non-negatively curved} if there is a WNN metric $g$ adapted to $(\cal H,b)$. Theorem \ref{thm:homo} follows from Theorem \ref{thm:var} by observing that $G/K\to G/H$ has a metric as in $(2)$.  Item $(3)$ is proved through a flat-strip type of result (Lemma \ref{lem:flatgeo}).

\vspace*{0.2cm}

The author would like to thank David Gonz\'alez for his comments on the paper.

%

\section{Dual holonomy fields}
The main tool used here  is {dual holonomy fields} (as introduced in \cite{speranca_oddbundles}). Let $c$ be a horizontal curve, i.e., $\dot c\in\cal H$, recall that a \textit{holonomy field} $\xi$ along $c$ is a vertical field satisfying
\begin{equation*}
\nabla_{\dot c}\xi=-A^*_{\dot c}\xi-S_{\dot c}\xi,
\end{equation*}
where $S\co \cal H\times\cal H\to \cal V$ is the \textit{second fundamental form of the fibers:}
\begin{equation*}
S_X\xi=-(\nabla_\xi \bar X)^v
\end{equation*}
with $\bar X$ any horizontal extension of $X$. A vertical field $\nu$ along $c$ is called a \textit{dual holonomy field} if\begin{equation}\label{eq:dualhol}
\nabla_{\dot c}\nu=-A^*_{\dot c}\nu+S_{\dot c}\nu.
\end{equation}

As we shall see, dual holonomy fields naturally appear when dealing with adapted metrics. Their role is represented by Lemmas \ref{lem:dualinv}, \ref{lem:flatgeo} and equation \ref{eq:K}. 

Given a horizontal geodesic $c$, we define its \textit{infinitesimal holonomy transofrmation} $\hat{c}(t)\co\cal V_{c(0)}\to\cal V_{c(t)}$ by setting $\hat c(t)\xi(0)=\xi(t)$, where $\xi$ is a holonomy field along $c$.
We recall  that, given an adapted metric $g$, the dual holonomy field  $\nu$ along $c$ satisfies (\cite[Proposition 4.1]{speranca_oddbundles}):
\begin{equation*}
\nu(t)=\hat c(t)^{-*}\nu(0),
\end{equation*}
where $\hat c(t)^{-*}$ is the inverse of the $g$-dual of $\hat c(t)$. Let $g,g'$ be metrics adapted to $(\cal H,b)$ and let $P_p\co\cal V_p\to\cal V_p$ be the tensor defined by
\begin{equation}
g'(\xi,\eta)=g(P_p\xi,\eta).
\end{equation}
We use $^*$ to denote $g$-duals and $^\dagger$ to denote $g'$-duals. Thus, $T^\dagger=P^{-1}_pT^*P_q$ for any operator $T\co\cal V_p\to \cal V_q$.

\begin{lem}\label{lem:dualinv}
	If $\nu'(t)$ is a dual holonomy field on $(M,g')$,  then $\nu'(t)=P^{-1}_{c(t)}\nu(t)$, where $\nu(t)$ is the dual holonomy  field on $(M,g)$  with initial condition $\nu(0)=P_{c(0)}\nu'(0)$. In particular, for every $t$,
	\begin{equation}\label{eq:lemdualinv}
	A^\dagger_{\dot c}\nu'(t)=A^*_{\dot c}\nu(t).
	\end{equation}
\end{lem}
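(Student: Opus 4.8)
The plan is to derive both assertions from the cited identity $\nu(t)=\hat c(t)^{-*}\nu(0)$ together with the conjugation rule $T^\dagger=P^{-1}_pT^*P_q$, so that the argument becomes pure bookkeeping of adjoints for the two inner products. The one genuinely geometric input — which I expect to be the crux — is that the infinitesimal holonomy transformation $\hat c(t)\co\cal V_{c(0)}\to\cal V_{c(t)}$ is \emph{independent of the chosen adapted metric}. Indeed, a horizontal geodesic is precisely a horizontal lift of a $b$-geodesic of the base (by O'Neill's theorem the horizontal lift of a geodesic is a geodesic for every Riemannian submersion, and the lift depends only on $\cal H$; see \cite{oneill}), and the holonomy field with $\xi(0)=\xi_0$ is, by its description as the vertical variation field of the corresponding variation $c_s$ through such lifts, determined only by $(\cal H,b)$. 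Hence $t\mapsto\xi(t)$, and therefore $\hat c(t)$, is the same for $g$ and $g'$.

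Granting this, I would first establish $\nu'(t)=P^{-1}_{c(t)}\nu(t)$. Applying the cited formula to $g'$ gives $\nu'(t)=\hat c(t)^{-\dagger}\nu'(0)$, where $^{-\dagger}$ is the inverse of the $g'$-dual. Taking $T=\hat c(t)\co\cal V_{c(0)}\to\cal V_{c(t)}$, i.e. $p=c(0)$, $q=c(t)$, the rule $T^\dagger=P^{-1}_pT^*P_q$ yields $\hat c(t)^\dagger=P^{-1}_{c(0)}\hat c(t)^*P_{c(t)}$, hence $\hat c(t)^{-\dagger}=P^{-1}_{c(t)}\hat c(t)^{-*}P_{c(0)}$. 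Substituting the initial condition $\nu(0)=P_{c(0)}\nu'(0)$ gives
\[
\nu'(t)=P^{-1}_{c(t)}\hat c(t)^{-*}P_{c(0)}\nu'(0)=P^{-1}_{c(t)}\hat c(t)^{-*}\nu(0)=P^{-1}_{c(t)}\nu(t),
\]
which is the first claim.

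For \eqref{eq:lemdualinv} I would use that the tensor $A$ itself is metric-independent: $A_XY=\tfrac12[\bar X,\bar Y]^v$ is built from the Lie bracket and the fixed splitting $TM=\cal H\oplus\cal V$, so it agrees for $g$ and $g'$; only its dual changes. Since any two adapted metrics coincide on $\cal H$ (both make $d\pi|_\cal H$ an isometry onto $(TB,b)$) and differ on $\cal V$ by the $g$-self-adjoint operator $P$, I would compute, for $X,Y\in\cal H$ and $\xi\in\cal V$,
\[
g(A^\dagger_X\xi,Y)=g'(A^\dagger_X\xi,Y)=g'(A_XY,\xi)=g(A_XY,P\xi)=g(A^*_XP\xi,Y).
\]
As $Y\in\cal H$ is arbitrary and $g|_\cal H$ is nondegenerate, this forces $A^\dagger_X=A^*_X\circ P$. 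Inserting $X=\dot c$ and $\xi=\nu'(t)=P^{-1}_{c(t)}\nu(t)$ gives $A^\dagger_{\dot c}\nu'(t)=A^*_{\dot c}\big(P_{c(t)}\nu'(t)\big)=A^*_{\dot c}\nu(t)$, as required.

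I expect the only delicate point to be the metric-independence of $\hat c(t)$; once that is available, both parts are formal manipulations of adjoints, with no curvature or connection computation needed. A more computational alternative would verify directly that $P^{-1}_{c(t)}\nu(t)$ solves the $g'$-dual holonomy equation \eqref{eq:dualhol} and then invoke uniqueness of solutions, but this requires expressing $\nabla'$, $A^\dagger$ and the second fundamental form of $g'$ in terms of the $g$-data, and seems strictly more laborious.
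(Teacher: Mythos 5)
Your proof is correct and follows essentially the same route as the paper: apply the formula $\nu'(t)=\hat c(t)^{-\dagger}\nu'(0)$, convert $^\dagger$-duals to $^*$-duals via $T^\dagger=P^{-1}_pT^*P_q$, and conclude \eqref{eq:lemdualinv} from the identity $A^\dagger_X\xi=A^*_XP_p\xi$. The only difference is that you explicitly justify two points the paper leaves implicit — the metric-independence of holonomy fields (hence of $\hat c(t)$) and the derivation of $A^\dagger_X=A^*_X\circ P$ from the fact that adapted metrics agree on $\cal H$ — which is a sound and welcome elaboration, not a different argument.
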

\begin{proof}
According to the discussion above, 
\[\nu'(t)=\hat c(t)^{-\dagger}\nu'(0)=(\hat c(t)^\dagger)^{-1}\nu'(0)=P^{-1}_{c(t)}\hat c(t)^{-*}P_{c(0)}\nu'(0)=P^{-1}_{c(t)}\nu(t).\]
Equation \eqref{eq:lemdualinv} follows since $A^\dagger_X\xi=A^*_XP_p\xi$ for all $X\in\cal H_p,\xi\in\cal V_p$.
\end{proof}
\begin{lem}\label{lem:flatgeo}
	Let $g$ be a WNN metric adapted  to $(\cal H,b)$. If $\nu$ is a dual holonomy field along a horizontal geodesic $c$ satisfying $A^*_{\dot c}\nu(0)=0$, then $A^*_{\dot c}\nu(t)=0$ for all $t$.
\end{lem}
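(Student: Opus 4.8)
The plan is to run a Grönwall argument on the non-negative function $f(t)=\|A^*_{\dot c(t)}\nu(t)\|^2$: I will show that $f$ satisfies a linear differential inequality whose right-hand side is exactly the quantity controlled by \eqref{eq:WNN}. Because the WNN inequality is assumed for \emph{every} horizontal vector, hence for both $\dot c$ and $-\dot c$, this will upgrade to a two-sided bound $|f'|\le Cf$, which together with $f(0)=0$ forces $f\equiv 0$.

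First I would set $W(t)=A^*_{\dot c(t)}\nu(t)\in\cal H_{c(t)}$ and differentiate along $c$. Treating $A^*$ as a tensor and applying the Leibniz rule, using that $c$ is a geodesic ($\nabla_{\dot c}\dot c=0$) and that $\nu$ satisfies the dual holonomy equation \eqref{eq:dualhol}, I compute $\nabla_{\dot c}W$. The crucial point is that only the vertical part $(\nabla_{\dot c}\nu)^v=S_{\dot c}\nu$ of $\nabla_{\dot c}\nu$ enters the (vertical) second slot of $A^*$, while its horizontal part $-A^*_{\dot c}\nu$ does not contribute. This gives $\nabla_{\dot c}W=(\nabla_{\dot c}A^*)_{\dot c}\nu+A^*_{\dot c}S_{\dot c}\nu$, and hence $\tfrac12 f'=\langle\nabla_{\dot c}W,W\rangle=\langle(\nabla_{\dot c}A^*)_{\dot c}\nu+A^*_{\dot c}S_{\dot c}\nu,\,A^*_{\dot c}\nu\rangle$, which is precisely the right-hand side of \eqref{eq:WNN} evaluated at $X=\dot c$, $\xi=\nu$.

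Next I apply \eqref{eq:WNN}. Since $\|\dot c\|=v$ is constant along a geodesic, the inequality with $X=\dot c$ yields $f'\le 2\tau v f$. Applying \eqref{eq:WNN} instead to the (still horizontal) vector $X=-\dot c$ with the same $\xi=\nu$, and observing that its right-hand side is odd in $X$ while its left-hand side is even (so that $\|A^*_{-\dot c}\nu\|=\|A^*_{\dot c}\nu\|$), gives the reverse estimate $f'\ge -2\tau v f$. Together these produce $|f'|\le 2\tau v f$. From $f(0)=\|A^*_{\dot c}\nu(0)\|^2=0$ and $f\ge 0$, Grönwall's inequality applied in both time directions forces $f\equiv 0$, i.e. $A^*_{\dot c}\nu(t)=0$ for all $t$.

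One bookkeeping point is that \eqref{eq:WNN} is only assumed on a neighborhood $U$ of each point, with a constant $\tau$ that may depend on $U$; this is handled by a standard connectedness argument, since the local Grönwall estimate shows that $\{t:A^*_{\dot c}\nu(t)=0\}$ is open, and it is clearly closed and nonempty. I expect the main obstacle to be the tensorial computation of $\nabla_{\dot c}W$ in the second paragraph: one must verify cleanly that the horizontal component $-A^*_{\dot c}\nu$ of $\nabla_{\dot c}\nu$ drops out, so that $\tfrac12 f'$ matches the right-hand side of \eqref{eq:WNN} exactly. Once this identity is established, the Grönwall and connectedness steps are routine.
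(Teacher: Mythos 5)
Your proof is correct and takes essentially the same route as the paper: differentiating $u(t)=\|A^*_{\dot c}\nu(t)\|^2$ along $c$, using the dual holonomy equation \eqref{eq:dualhol} (and $\nabla_{\dot c}\dot c=0$) to identify $\tfrac12 u'$ with the right-hand side of \eqref{eq:WNN} at $(X,\xi)=(\dot c,\nu)$, and then applying Gr\"onwall with $u(0)=0$. Your parity trick at $X=-\dot c$ is just a repackaging of the paper's time-reversal $\tilde c(t)=c(-t)$, and your open--closed argument tidily handles the locality of the constant $\tau$, which the paper leaves implicit; your sign-consistent Leibniz identity $\nabla_{\dot c}(A^*_{\dot c}\nu)=(\nabla_{\dot c}A^*)_{\dot c}\nu+A^*_{\dot c}S_{\dot c}\nu$ is in fact what the paper's conclusion $2\tau u\geq u'$ requires.
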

\begin{proof}
	The proof goes along the lines of Proposition 5.2 in \cite{speranca_grove}. Take $\|\dot c\|=1$ and denote $u(t)=||A^*_{\dot c}\nu(t)||^2$. From \eqref{eq:dualhol}, we have
	\begin{align*}
	(\nabla_{\dot c}A^*)_{\dot c}\nu=\nabla_{\dot c}(A^*_{\dot c}\nu)-A^*_{\dot c}\nabla_{\dot c}^v\nu=\nabla_{\dot c}(A^*_{\dot c}\nu)+A_{\dot c}^*S_{\dot c}\nu.
	\end{align*}
	Thus, equation \eqref{eq:WNN} gives $2\tau u(t)\geq u'(t)$. Gronwall's inequality now implies that $u(t)\leq u(0)e^{2\tau t}$ for all $t\geq 0$. Replacing $c$ by $\tilde c(t)=c(-t)$ proves the assertion. 
\end{proof}
As a last observation, we recall an identity in \cite{speranca_oddbundles}. Given a Riemannian metric $g$, let $K_g(X,\nu)=R_g(X,\nu,\nu,X)$ be the unreduced sectional curvature of $X\wedge\nu$. We recall from  \cite[Proposition 4.2]{speranca_oddbundles} that a dual holonomy field $\nu$ satisfies
\begin{gather}\label{eq:K}
	K_g(\dot c,\nu(t))=\frac{1}{2}\frac{d^2}{dt^2}||\nu(t)||^2-3||S_{\dot c}\nu(t)||^2+||A^ *_{\dot c}\nu(t)||^2.
\end{gather}

\section{Proof of Theorem \ref{thm:var}}
\begin{proof}[Proof of $(1)$]
	Let $g,g'$ be adapted to $(\cal H,g)$ and assume $g$ WNN. Therefore, given $p\in M$, there is a neighborhood $U$ of $p$ and a $\tau>0$ such that \eqref{eq:WNN} holds. For any metric $h$, $X\in\cal H_q$ and $\nu_0\in\cal V_q$, denote $u(h,\nu_0,X,t)=\|A^\vee_{\dot c}\nu(t)\|^2$ where $c(t)=\exp(tX)$, $\nu$ is the dual holonomy field along $c$ with $\nu(0)=\nu_0$ and $A^\vee$ is the $h$-dual of $A$. Lemma \ref{lem:dualinv} gives $u(g',\nu_0,X,t)=u(g,P_q\nu_0,X,t)$ for every  $X\in\cal H_q$, $\nu_0\in\cal V_q$ and $t$. In particular,
\[2\tau u(g',\nu_0,X,t)=2\tau u(g,P_q\nu_0,X,t)\geq u'(g,P_q\nu_0,X,t)=u'(g',\nu_0,X,t).\qedhere\]
\end{proof}

\begin{proof}[Proof of  $(2)$]
Item $(2)$ follows by analyzing the discriminant of the polynomial 
\[K_g( X,\lambda A^*_X\xi+\xi)=K_g(\xi,X)+2\lambda R_g(X,A^*_X\xi,\xi,X)+\lambda^2K_g(X,A^*_X\xi).\]
From the curvature assumption, we have 
\[K_g(\xi,A^*_X\xi)K_g(X,A^*_X\xi)\geq R_g(X,A^*_X\xi,\xi,X)^2.\] 
On the other hand,  if $(M,g)$ has totally geodesic fibers, O'Neill's equation (see \cite{oneill} or \cite[page 44]{gw}) gives $R_g(X,A^*_X\xi,\xi,X)=\lr{(\nabla_XA^*)_X\xi,A^*_X\xi}$ and $K_g(\xi,X)=\|A^*_X\xi\|^2$. Moreover, taking a relatively compact neighborhood on $M$, there is a $\tau>0$ such that $K_g(X,A^*_X\xi)\leq \tau \|X\|^2\|A^*_X\xi\|^2$.
\end{proof}

\begin{proof}[Proof of  $(3)$]
	We argue by contradiction. Suppose there are   non-zero $X\in\cal H,\nu_0\in\cal V$ such that $A^*_X\nu_0=0$. On one hand, Lemma \ref{lem:flatgeo} guarantees that $A^*_{\dot c}\nu(t)=0$ for the dual holonomy field $\nu$ along $c$ with $\nu(0)=\nu_0$. On the other hand, holonomy fields have bounded norm whenever $\pi$ admits a compact structure group, i.e., there is a constant $L$ such that $\|\nu(t)\|^2\leq L\|\nu(0)\|^2$ for every $t$ (see \cite[Proposition 3.4]{speranca_oddbundles}). Applying equation \eqref{eq:K}, we get 
\begin{gather*}\label{eq:KK}
\frac{d^2}{dt^2}\|\nu(t)\|^2\geq 2K_g(\dot c,\nu)\geq \kappa \|\nu(t)\|^2
\end{gather*} 
for some fixed constant $\kappa>0$, contradicting  the boundedness of $\|\nu(t)\|^2$.
\end{proof}

\bibliographystyle{alpha}
\bibliography{bib230617}

\begin{thebibliography}{G{\'A}G16}

\bibitem[AK15]{amann2015positive}
Manuel Amann and Lee Kennard.
\newblock Positive curvature and rational ellipticity.
\newblock {\em Algebraic \& Geometric Topology}, 15(4):2269--2301, 2015.

\bibitem[Che16]{chen2016riemannian}
Xiaoyang Chen.
\newblock Riemannian submersions from compact four manifolds.
\newblock {\em Mathematische Zeitschrift}, 282(1-2):165--175, 2016.

\bibitem[G{\'A}G16]{gonzalez2016soft}
David Gonz{\'a}lez-{\'A}lvaro and Luis Guijarro.
\newblock Soft restrictions on positively curved riemannian submersions.
\newblock {\em The Journal of Geometric Analysis}, 26(2):1442--1452, 2016.

\bibitem[G{\'A}R17]{gonzalez2017note}
David Gonz{\'a}lez-{\'A}lvaro and Marco Radeschi.
\newblock A note on the petersen-wilhelm conjecture.
\newblock {\em arXiv preprint arXiv:1706.00366}, 2017.

\bibitem[GW09]{gw}
Detlef. Gromoll and Gerard Walshap.
\newblock {\em Metric Foliations and Curvature}.
\newblock Birkhäuser Verlag, Basel, 2009.

\bibitem[Jim]{jimenez2005riemannian}
William Jimenez.
\newblock Riemannian submersions and lie groups.

\bibitem[O'N66]{oneill}
B.~O'Neill.
\newblock On the fundamental equations of riemannian subemersions.
\newblock {\em Michigan Math. J.}, 13:459--469, 1966.

\bibitem[Spe16]{speranca_oddbundles}
Llohann Speran\c{c}a.
\newblock On {R}iemannian foliations over positively curved manifolds.
\newblock arXiv:1602.01046, 2016.

\bibitem[Spe17]{speranca_grove}
Llohann Speran\c{c}a.
\newblock Ambrose-{S}inger theorems and totally geodesic foliations on compact
  lie groups.
\newblock arXiv:1703.09577, 2017.

\bibitem[Zil07]{ziller2007examples}
Wolfgang Ziller.
\newblock Examples of riemannian manifolds with non-negative sectional
  curvature.
\newblock {\em Surveys in differential geometry}, 11:63--102, 2007.

\end{thebibliography}

\end{document}